\date{}
\def\ps@pprintTitle{%
 \let\@oddhead\@empty
 \let\@evenhead\@empty
 \let\@oddfoot\@empty
 \let\@evenfoot\@empty
}
\renewcommand\section{\@startsection {section}{1}{\z@}%
  {-3.5ex \@plus -1ex \@minus -.2ex}%
  {2.3ex \@plus.2ex}%
  {\normalfont\bfseries\Large}}
\renewcommand\subsection{\@startsection {subsection}{2}{\z@}%
  {-3.25ex\@plus -1ex \@minus -.2ex}%
  {1.5ex \@plus .2ex}%
  {\normalfont\bfseries\large}}
\def \Z {{\mathbb Z}}
\def \F {{\mathbb F}}
\def \Aut{{\mbox{Aut}}}
\def \ker{{\mbox{ker}}}
\newtheorem{theorem}{Theorem}[section]
\newtheorem{cor}[theorem]{Corollary}
\newtheorem{lemma}[theorem]{Lemma}
\newtheorem{pro}[theorem]{Proposition}
\newtheorem{rem}[theorem]{Remark}
\newtheorem{obs}[theorem]{Observation}
\newtheorem{definition}[theorem]{Definition}
\journal{}
\begin{document}

\begin{frontmatter}

\title{\textbf{An Arithmetic Characterization of 2-Generated Numbers}}

\author[inst1]{Bireswar Das\orcidlink{0000-0002-3758-4033}}
\ead{bireswar@iitgn.ac.in}

\author[inst2]{Kavita Samant\orcidlink{0009-0006-9402-5154}}
\ead{ks299@snu.edu.in}

\author[inst3]{Dhara Thakkar\orcidlink{0000-0002-4234-0105}}
\ead{dharathakkar754@gmail.com}

\affiliation[inst1]{organization={Department of Computer Science and Engineering, Indian Institute of Technology},
             city={Gandhinagar},
             country={India}}
\affiliation[inst2]{organization={Department of Mathematics, Shiv Nadar Institution of Eminence},
             city={Delhi-NCR},
             country={India}}
\affiliation[inst3]{organization={
Graduate School of Mathematics, Nagoya University},
             city={Nagoya},
             country={Japan}}
             
\begin{abstract}
A group $G$ is said to be $k$-generated if it has a generating set with $k$ elements.  A positive integer $n$ is called a \emph{2-generated number} if every group of order $n$ is 2-generated. In this article,  we establish an arithmetic characterization of 2-generated numbers expressed in terms of the prime factorization of $n$.
\end{abstract}

\begin{keyword}
Finite Groups\sep Generating Set \sep Groups of Cube-Free order\sep  Generating Graph.
\MSC[] 20D60\sep 20F05\sep 05C25
\end{keyword}
\end{frontmatter}

\section{Introduction}
For a natural number $n$, let $\mathcal{G}_n$ denote the set of all groups of order $n$ up to isomorphism. Given a group-theoretic property $P$, we say that $n$ is a \emph{$P$-number} if every group in $\mathcal{G}_n$ satisfies property $P$. For example, if $P$ denotes the property of being cyclic, then $n$ is termed a \emph{cyclic number} if every group of order $n$ is cyclic. Various properties $P$ such as being abelian, nilpotent, satisfying the converse of Lagrange’s theorem (CLT) have been studied in this context, often yielding elegant arithmetic characterizations of the corresponding $P$-numbers (see~\cite{berger,cyclicnum, Pakia,cltnum}). In these works, the characterizations are mainly based on the prime factorizations of the integers involved. 

For instance, a positive integer $n$ is a cyclic number if and only if $\gcd(n, \varphi(n)) = 1$, where $\varphi(n)$ is Euler’s totient function (see \cite{cyclicnum}). In~\cite{Pakia}, the authors developed a unified approach to characterize \emph{cyclic numbers}, \emph{abelian numbers} and \emph{nilpotent numbers}. For their characterization, they used the notion of \emph{nilpotent factorization}, defined as follows: let $n = p_1^{e_1}p_2^{e_2} \cdots p_k^{e_k}$ be the prime factorization of $n$, where the $p_i$ are distinct primes. Then $n$ is said to have a \emph{nilpotent factorization}, if for all integers $a, i, j$ with $1 \leq a \leq e_i$, we have $p_i^a \not\equiv 1\pmod{p_j}$. 

With this definition, a positive integer $n$ is a \emph{nilpotent number} if and only if it has nilpotent factorization. Furthermore, $n$ is a cyclic number if and only if it is a square-free number with nilpotent factorization, and $n$ is an \emph{abelian number} if and only if it is cube-free and has nilpotent factorization.

In \cite{berger}, an arithmetic characterization is given for CLT-numbers, and in \cite{cltnum}, the authors further explored special cases, such as abelian and cyclic CLT-groups, which are referred to as ACLT-numbers and CCLT-numbers, respectively.

 Motivated by the interplay between group-theoretic properties and arithmetic structure, we study a new class of integers, namely \emph{2-generated numbers}, defined as follows:

 \begin{definition}\label{def-2Generation}
     A positive integer $n$ is called a \emph{2-generated number} if every group of order $n$ can be generated by two elements.
 \end{definition}
 
While the structure of 1-generated (that is, cyclic) groups is well-understood and has been extensively characterized, the study of 2-generated groups is more challenging and less structured. Numerous investigations have addressed conditions in this direction, though the results are scattered in the literature without a comprehensive characterization. A well-known result in this area is that every finite simple group is 2-generated. The proof is mainly based on the Classification of Finite Simple Groups. It begins with Miller’s early work on alternating groups~\cite{simple1}. In~\cite{simple5}, Steinberg proved this for simple groups of Lie type. However, the proof was finally completed in 1984, when Aschbacher and Guralnick~\cite{simple6} showed that every sporadic group is 2-generated. 

Apart from simple groups, several other interesting families of groups, such as symmetric, dihedral, and dicyclic groups, are also 2-generated even though their internal structures are different. The structural differences among these groups show the complexity of 2-generated groups. This makes it important to look more closely at how generation is linked to the arithmetic of the group order. This is one of the motivations for studying 2-generated numbers. 

To better understand the structure of 2-generated groups, Lucchini and Maróti introduced the concept of generating graphs (explicitly) and extensively studied in~\cite{gen1, gen2}. The \emph{generating graph} $\Gamma(G)$ of a finite group $G$ is the graph whose vertex set consists of the elements of $G$, with an edge joining two distinct elements if and only if they generate $G$. These graphs are trivial (that is, empty) for groups that cannot be generated by two elements. 
Determining the values of $n$ for which all groups of order $n$ are 2-generated yields a sufficient condition for the generating graphs to be non-empty, based solely on the number of vertices.

The main results of this article are stated as follows:
\begin{restatable}{theorem}{mainthmOdd}
\label{thm:odd}
Let $n$ be an odd positive integer. Let $n=p_1^{e_1}p_2^{e_2}\dots p_k^{e_k}$, where $p_1>p_2>\dots >p_k$ are distinct primes and each $e_i$ is a positive integer. Then $n$ is a 2-generated number if and only if $n$ is cube-free and for every $i<j,$ $p_j\nmid (p_{i}-1)$  or $e_i=1$.
\end{restatable}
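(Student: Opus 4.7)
Necessity is argued contrapositively. If $n = p^e m$ is not cube-free (so $e \geq 3$ and $\gcd(p,m) = 1$), the abelian group $G = (\Z/p)^3 \times \Z/p^{e-3} \times \Z/m$ has Sylow $p$-subgroup of Frattini rank at least three, hence $d(G) \geq 3$. If $n$ is cube-free but some pair of primes $p > q$ dividing $n$ satisfies $q \mid p - 1$ and $e_p = 2$, pick a primitive $q$-th root of unity $\lambda \in \F_p^*$ and set $G_0 = (\Z/p)^2 \rtimes \Z/q$ with the scalar action by $\lambda$. Suppose two elements $x, y \in G_0$ generated $G_0$; without loss of generality $\bar x$ generates $G_0/(\Z/p)^2 \cong \Z/q$ and one may replace $y$ by a product with a power of $x$ to assume $y \in (\Z/p)^2$. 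But $xyx^{-1} = \lambda y$, so $\langle x, y\rangle \cap (\Z/p)^2 \subseteq \F_p y$ has $\F_p$-dimension at most one, forcing $|\langle x, y\rangle| \leq pq < |G_0|$, a contradiction. Hence $d(G_0) \geq 3$, and $G = G_0 \times C$ with $C$ cyclic of order $n/|G_0|$ realises order $n$ with $d(G) \geq 3$.

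For sufficiency, let $G$ have order $n$; by Feit--Thompson (since $n$ is odd) $G$ is solvable, and I induct on $|G|$. Take a minimal normal subgroup $N \lhd G$, which by solvability and cube-freeness is elementary abelian of order $p$ or $p^2$; inductively, $G/N$ is $2$-generated. The crucial structural observation, where the arithmetic hypothesis enters, handles $N \cong (\Z/p)^2$. The action $G/C_G(N) \hookrightarrow \mathrm{GL}_2(\F_p)$ is irreducible (by minimality) and of odd order, so its image lies in a Singer (non-split) torus and is cyclic of order $d$ dividing $p^2 - 1$. For any prime $q \mid d$, one has $q \mid n$ and $q \mid p^2 - 1$; the arithmetic hypothesis (combined with $e_p = 2$, forced by $|N| = p^2$) forbids $q \mid p - 1$, so $q \mid p + 1$. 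Since all primes involved are odd, this forces $d \mid p + 1$, hence $G/C_G(N)$ embeds in the unique order-$(p+1)$ subgroup of $\F_{p^2}^*$; intersecting that subgroup with $\F_p^*$ gives only $\{\pm 1\}$, so no nontrivial element of $G/C_G(N)$ is a scalar. Consequently every nontrivial $\sigma \in G/C_G(N)$ acts on $N$ without an invariant $\F_p$-line, and for any $0 \neq y \in N$ the pair $\{y, \sigma y \sigma^{-1}\}$ is an $\F_p$-basis of $N$.

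The lifting step is Gaschütz-style: since $N$ is abelian, a $2$-generating set of $G/N$ can be promoted to one of $G$ by adjusting one lift by a suitable element of $N$, invoking the spanning property above when $N \cong (\Z/p)^2$ and the simpler semidirect structure when $N \cong \Z/p$ (Schur--Zassenhaus handles $p \nmid |G/N|$, and a direct analysis of the normaliser of $N$ inside the Sylow $p$-subgroup handles the other subcase). The main obstacle I anticipate is this lifting in the subcase $N \cong \Z/p$ with $p \mid |G/N|$, where $N$ need not be complemented in $G$; my plan is to use cube-freeness to conclude that the Sylow $p$-subgroup containing $N$ is abelian of rank at most two, and the arithmetic hypothesis to deduce that its normaliser acts cyclically, facts that together allow an explicit construction of generators by combining a lift of a generator of $G/C_G(N)$ with a carefully chosen element of $N$.
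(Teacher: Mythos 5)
Your necessity argument is correct and essentially the paper's: it also uses $(\Z_p)^3\times\Z_{n/p^3}$ for the cube-free part and the scalar-action group $(\Z_p)^2\rtimes\Z_q$ for the arithmetic part (Theorem~\ref{thm:np^2q}); your normal-closure computation showing $d(G_0)\geq 3$ is a clean substitute for the paper's case analysis on element orders. The sufficiency direction, however, contains a genuine gap that you yourself flag: the subcase $N\cong\Z_p$ with $p\mid |G/N|$ is not proved but only announced as a ``plan,'' and that plan (analysing the normaliser of $N$ inside a Sylow $p$-subgroup) is not a proof. This case really does arise for your choice of an arbitrary minimal normal subgroup (e.g.\ when the Sylow $p$-subgroup is $\Z_{p^2}$, or when it is $\Z_p\times\Z_p$ with reducible action), so it cannot be omitted. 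The paper avoids it entirely via the Sylow-tower structure of odd cube-free groups (Corollary~\ref{cor:tower}): one always quotients by the \emph{full} Sylow subgroup of the largest prime, whose order is coprime to its index, so Schur--Zassenhaus makes all complements conjugate, their number is at most $[G:N_G(H)]\leq |N|<|N|^{d(G/N)}$, and Gasch\"utz's criterion (Theorem~\ref{thm:Andrea2}) yields $d(G)\leq d(G/N)$.

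If you wish to keep the ``arbitrary minimal normal subgroup'' route, the missing case can be closed by using the arithmetic hypothesis more directly than your plan does. If $N\cong\Z_p$ and $p^2\mid n$, then $G/C_G(N)$ embeds in $\Aut(N)\cong\Z_{p-1}$, and since $e_p=2$ the hypothesis forces $\gcd(n,p-1)=1$; hence $C_G(N)=G$ and $N$ is central. If $N$ is not complemented, Theorem~\ref{thm:Andrea2} already rules out $d(G)=d(G/N)+1$, so $d(G)\leq 2$ by Theorem~\ref{thm:Andrea}. If $N$ has a complement $K$, then (as $N$ is central of order $p$) every complement has the form $\{k\phi(k):k\in K\}$ with $\phi\in\mathrm{Hom}(K,N)$, and $|\mathrm{Hom}(K,N)|\leq p$ because $p$ divides $|K|$ only to the first power; thus the number of complements is at most $p<p^2=|N|^{d(G/N)}$ and Gasch\"utz again gives $d(G)\leq 2$. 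Separately, your lifting step for $N\cong(\Z_p)^2$ is only sketched: the ``adjust one lift'' claim needs the observation that a proper subgroup $\langle a,bn\rangle$ with $n\in N$ would be a complement, together with the fixed-point-freeness of the action of a suitable generator on $N$ to show only $n=1$ can occur; as written it is an assertion rather than an argument.
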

\begin{restatable}{theorem}{mainthmEven}
\label{thm:even}
Let $n$ be an even positive integer. Then $n$ is a 2-generated number if and only if $n=2^{\alpha}p_1p_2p_3\dots p_k$, where $p_i's$ are odd primes and distinct, and $\alpha\leq 2$.
\end{restatable}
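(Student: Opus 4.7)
I would prove both directions separately: the ``only if'' direction by exhibiting explicit non-$2$-generated groups of order $n$, and the ``if'' direction by a structural analysis of groups of cube-free even order.

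For the ``only if'' direction, I would argue by contrapositive. If the even integer $n$ is not of the stated form, then either $8\mid n$ or $p^2\mid n$ for some odd prime $p$. In the first case, the group $(\Z/2\Z)^3\times \Z/(n/8)\Z$ has order $n$ and admits $(\Z/2\Z)^3$ as a direct factor, so it requires at least $3$ generators. In the second case, since $n$ is even we have $2p^2\mid n$, and I would take $D\times \Z/(n/2p^2)\Z$, where $D=(\Z/p\Z)^2\rtimes\langle\sigma\rangle$ is the generalized dihedral group with $\sigma$ of order $2$ inverting $(\Z/p\Z)^2$. A direct case analysis on any pair $g,h\in D$, split by whether zero, one, or both of $g,h$ involve $\sigma$, shows that $\langle g,h\rangle\cap(\Z/p\Z)^2$ is cyclic of order at most $p$ (conjugation by $\sigma$ sends any $v\in(\Z/p\Z)^2$ to $-v\in\langle v\rangle$, so no new line is produced), whence $|\langle g,h\rangle|\le 2p<|D|$ and $d(D)\ge 3$.

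For the ``if'' direction, I would split on $\alpha$. If $\alpha=1$, then $n$ is square-free, so by H\"older's classical theorem every group of order $n$ is metacyclic, hence $2$-generated. If $\alpha=2$, then $n=4m$ is cube-free with $m$ odd and square-free. Given any group $G$ of order $n$, I would further split on whether $G$ is solvable. In the solvable case, Hall's theorem provides a Hall $2'$-subgroup $H$ of order $m$, which is $2$-generated (being of odd square-free order), and I would combine a $2$-generating pair of $H$ with the Sylow $2$-subgroup $P\in\{\Z/4\Z,V_4\}$ to produce two generators of $G$, exploiting the fact that $|\Aut(P)|\le 6$ severely restricts the possible coprime action. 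In the non-solvable case, the classification of finite simple groups together with cube-freeness forces every non-abelian simple composition factor of $G$ to lie in a short list (such as $A_5$ or $\mathrm{PSL}_2(q)$ for certain small primes $q$), each of which is $2$-generated by the classical theorem cited in the introduction; I would then lift $2$-generation to $G$ via standard generator-lifting arguments (e.g., a Gasch\"utz-type argument through the Frattini subgroup) combined with a solvable-radical analysis.

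\textbf{The principal obstacle} will be the $\alpha=2$ subcase of the ``if'' direction. In the solvable sub-subcase with $P\cong V_4$ and $3\mid m$, a Sylow $3$-subgroup of $H$ can act faithfully on $P$ via $\Aut(V_4)\cong S_3$, producing an $A_4$-type subquotient of $G$, and choosing two generators that simultaneously capture this $A_4$-action and the remaining cyclic pieces requires careful bookkeeping. In the non-solvable sub-subcase, combining a $2$-generating pair of the simple composition factor with generators of the solvable radical, while ensuring that no chief factor forces a third generator, is the central technical step and will require a careful lifting argument.
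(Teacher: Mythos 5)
Your ``only if'' direction is correct and complete: the $(\Z/2\Z)^3$ witness for $8\mid n$ and the generalized dihedral group $D=(\Z/p\Z)^2\rtimes\langle\sigma\rangle$ for $2p^2\mid n$ both work (your case analysis showing $|\langle g,h\rangle|\le 2p$ is sound, and $D$ is in fact the same group the paper constructs in its Theorem~3.7 with $q=2$, $a=-1$, described more explicitly). The $\alpha=1$ case of the ``if'' direction is also fine.

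The genuine gap is the $\alpha=2$ case of the ``if'' direction, which is the heart of the theorem; you have correctly identified the two hard sub-cases but have not resolved either. In the solvable sub-case your plan of ``combining a $2$-generating pair of a Hall $2'$-subgroup $H$ with the Sylow $2$-subgroup $P$'' does not go through as stated: $H$ need not be normal (e.g.\ in $A_4$), so there is no coprime action of $H$ on $P$ to restrict via $|\Aut(P)|\le 6$, and no canonical way to merge generators of $H$ and $P$ into two generators of $G$. The paper closes this with three specific tools you are missing: (i) if $S_2\cong\Z_4$ then every Sylow subgroup is cyclic, so $G$ is a Z-group and the H\"older--Burnside--Zassenhaus presentation gives $d(G)=2$ outright; (ii) if $S_2\cong\Z_2\times\Z_2$, an induction on the number of odd primes using a minimal normal subgroup $N$: when $N\cong\Z_{p_i}$, Schur--Zassenhaus plus Gasch\"utz's criterion (the number of complements of $N$ is at most $[G:N_G(H)]\le p_i<p_i^2=|N|^{d(G/N)}$) forces $d(G)=2$; (iii) when the only minimal normal subgroup is a $2$-group, Lucchini's theorem $d(G)=\max\{2,d(G/N)\}$ finishes it since $G/N$ has square-free order. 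In the non-solvable sub-case, your proposed ``generator-lifting through the solvable radical'' is not carried out and is nontrivial in general; the paper instead invokes the classification of non-solvable cube-free groups (Qiao--Li): $G\cong PSL(2,p)\times L$ with $L$ of odd (hence here square-free) order, which reduces the claim to the coprime direct-product lemma $d(A\times B)=\max\{d(A),d(B)\}$. Without substitutes for these ingredients, the proposal is a plan rather than a proof of the forward implication.
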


\paragraph*{Organization of the paper} The paper is organized into four sections. Section~\ref{sec:prelim} introduces the necessary preliminary concepts that will be used throughout the paper. Section~\ref{sec:cube} focuses on determining conditions under which a cube-free number of the form  $p^{a}q^{b}$, where $p$ and $q$ are distinct primes and $a+b\leq 3$, is $2$-generated. In  Sections~\ref{sec:main}, we prove \Cref{thm:odd} and \Cref{thm:even}.
\section{Preliminaries}\label{sec:prelim}
In this section, we recall some fundamental concepts and results of group theory, and introduce notations that will be used throughout the paper. Interested readers may refer to \cite{robin,rotman} for more details.

In this paper, we consider only finite groups. For an integer $n$, the set of prime divisors of $n$ is denoted by $\pi(n)$. For a group $G$, we denote by $|G|$ the order of $G$, and $\Aut(G)$ as the automorphism group of $G$. For a subgroup $N$ of $G$, the index of $N$ in $G$ is denoted by  $[G:N]$. For a prime $p$, $S_p$ denotes a Sylow $p$-subgroup of $G$ and $n_p$ denotes the number of Sylow $p$-subgroups of $G$. A non-trivial subgroup $N$ of a group $G$ is said to be a minimal normal subgroup if it is normal,  and for any normal subgroup $X$ of $G$ such that $\{1\}\leq X\leq N$, $X=N$ or $X=\{1\}$. Every minimal normal subgroup of a finite solvable group is an elementary abelian $p$-group, for some prime $p$ (see~\cite[Theorem 5.24]{rotman}). Let $H$ be a subgroup (not necessarily normal) of a group $G$, then a subgroup $K$ of $G$ is called a complement of $H$ in $G$ if $K\cap H=\{1\}$ and $HK=G$. Let $C_G(H)$ and $ N_G(H) $ denote the centralizer and normalizer of a subset $H$  in  $G$, respectively.

The general linear group of $n\times n$ matrices over the finite field $\F_q$, where $q=p^k$ for some prime $p$, is denoted by $\mathrm{GL}(n, q)$. The \emph{projective special linear group} of degree $n$ over a field $\F_q$ is denoted by $\mathrm{PSL}(n, q)$.

\noindent\paragraph*{\textbf{Semidirect Product}} Let $N$ and $H$ be two groups and suppose that we have an action, $\phi:H\rightarrow \Aut(N)$, of $H$ on $N$, then $G:=\{(n,h) \mid n \in N, h \in H\}$ with the product operation, $(n_1,h_1)(n_2,h_2)=(n_1\phi_{h_1^{-1}} (n_2),h_1h_2)$, for all $(n_1,h_1),(n_2,h_2)\in G$, forms a group, known as the \emph{semidirect product} of $H$ by $N$, denoted by $N \rtimes_{\phi} H$.
The following theorem is known as the Schur--Zassenhaus theorem.
\begin{theorem}[Theorem~9.1.2,~\cite{robin}]\label{thm:schur}
   Let $N$ be a normal subgroup of a finite group $G$. Suppose $\gcd(|N|, [G:N])=1$,  then $G$ contains subgroups of order $[G:N]$ and any two of them are conjugate in $G$.
\end{theorem}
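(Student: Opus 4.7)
The plan is to prove both halves (existence of a complement and conjugacy of any two) by simultaneous induction on $|G|$, exploiting the hypothesis $\gcd(|N|,[G:N])=1$ at every reduction. Write $n=|N|$ and $m=[G:N]$; coprimality ensures that the $m$-th power map on $N$ is a bijection on every $G$-stable subset of $N$, and this is the arithmetic engine of the whole argument.

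For existence I would split on whether $N$ is abelian. In the abelian case, pick a set-theoretic section $t\colon G/N\to G$ with $t(1)=1$ and encode the failure of multiplicativity in the $2$-cocycle $c(g,h)=t(g)\,t(h)\,t(gh)^{-1}\in N$. The goal is to replace $t$ by $s(g)=t(g)\,\eta(g)^{-1}$ so that $s$ becomes a homomorphism; the correct $\eta$ is produced by averaging $c$ over $G/N$ and then taking the unique $m$-th root in $N$, which exists by coprimality. The image of $s$ is a subgroup of order $m$ complementing $N$, which yields the desired complement.

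For non-abelian $N$, I would pick a prime $p\mid n$, a Sylow $p$-subgroup $P$ of $N$, and split on whether $P$ is normal in $G$. If it is not, the Frattini argument gives $G=N\cdot N_G(P)$ and $[N_G(P):N_N(P)]=m$; since $N_G(P)<G$, the inductive hypothesis applied to $N_G(P)$ with its normal subgroup $N_N(P)$ produces a subgroup of order $m$, which is automatically a complement to $N$ in $G$. If $P$ is normal in $G$ and $P<N$, a two-stage reduction works: first apply induction to $G/P$ with normal subgroup $N/P$ to obtain $K\leq G$ with $K\cap N=P$ and $KN=G$, then apply the inductive hypothesis inside $K$ to split off $P$. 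The remaining case $P=N$ forces $N$ to be a non-abelian $p$-group, and then $Z(N)$ is a proper nontrivial characteristic subgroup of $N$ (hence normal in $G$), through which the same two-stage reduction goes.

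The conjugacy half follows a parallel induction. In the abelian case, two complements $H_1,H_2$ determine a difference $1$-cocycle whose class in $H^{1}(G/N,N)$ is simultaneously $m$-torsion and $n$-torsion, hence trivial, and an explicit conjugator is obtained by averaging. The non-abelian reduction uses the same proper normal or characteristic subgroups of $N$ to pass to smaller groups. The main obstacle I expect is conjugacy when $N$ itself is non-abelian simple: here no proper nontrivial normal subgroup of $G$ sits inside $N$, so the reduction must be routed through $G/N$ instead, and to complete the argument one invokes the Feit-Thompson theorem (since $\gcd(n,m)=1$ forces one of $n,m$ to be odd) to guarantee that at least one of $N$, $G/N$ is solvable, which is exactly what is needed to set up the conjugacy induction cleanly.
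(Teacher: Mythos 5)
Your outline is correct and is essentially the classical proof of the Schur--Zassenhaus theorem as given in the cited reference (Robinson, Theorem 9.1.2): the paper itself offers no proof of this statement, importing it as a known result. Your reduction scheme (the $2$-cocycle/averaging argument for abelian $N$, the Frattini and centre reductions for non-abelian $N$, the vanishing of $H^1(G/N,N)$ for conjugacy, and the appeal to the Feit--Thompson theorem to guarantee that one of $N$, $G/N$ is solvable in the unconditional conjugacy statement) matches the standard treatment, so there is nothing to correct.
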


\paragraph*{\textbf{Metacyclic Groups}} A group $G$ is called \emph{metacyclic} if it contains a cyclic normal subgroup $N$ such that $G/N$ is also cyclic. The subgroup $N$ is called the kernel of $G$. 
A metacyclic group $G$ can be written as $G = HN$ with $H\leq G$, $N \triangleleft G$, and both $H$ and $N$ are cyclic. Such a product is called \emph{metacyclic factorization} of $G$. If $G$ has a metacyclic
factorization $HN$ with $H\cap N = \{1\}$, then $G$ is a semidirect product of $H$ by $N$. 
A finite metacyclic group can be presented on two generators and three defining relations as follows (see~\cite[Theorem 7.21]{zassenhaus}):
$$G=\langle x, y\, |\, x^k=y^l, y^m=1, x^{-1}yx=y^n\rangle, $$
where $k, l, n,  m$ are positive integers such that $m\mid (n^k-1)$ and $m\mid l(n-1)$.

It follows from the above result that metacyclic groups are 2-generated. 

\paragraph*{\textbf{Minimum Generating Set of a Group}}
Let $G$ be a finite group. A generating set of $G$ with minimum size is called \emph{a minimum generating set}. The size of a minimum generating set of a group $G$ is denoted by $d(G)$. We mention a few results on $d(G)$, which will be required later in the paper.

\begin{theorem}[Theorem 1.3,~\cite{dg}]\label{thm:Andrea}
Let $N$ be a minimal normal subgroup of a group $G$,  then $d(G/N)\leq d(G)\leq d(G/N)+1$.
\end{theorem}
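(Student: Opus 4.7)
The plan is to prove the two inequalities separately. The lower bound $d(G/N) \le d(G)$ is routine: if $\{g_1,\dots,g_d\}$ generates $G$, then $\{g_1 N,\dots, g_d N\}$ generates $G/N$, so a smaller or equal number of generators suffices for the quotient.

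For the upper bound, set $d = d(G/N)$, pick generators $\bar g_1,\dots,\bar g_d$ of $G/N$, and lift them to $g_1,\dots,g_d \in G$. Writing $H = \langle g_1,\dots,g_d\rangle$, we have $HN = G$. It is enough to exhibit a single $x \in G$ with $\langle H, x\rangle = G$; then $\{g_1,\dots,g_d,x\}$ is a generating set of size $d+1$. Since $G = HN$, we may write $x = hn$ with $h \in H$ and $n \in N$, so $\langle H, x\rangle = \langle H, n\rangle$, and we may take $x \in N$. A direct computation shows $\langle H, n \rangle = H \langle n^H \rangle$, where $\langle n^H \rangle$ denotes the subgroup of $N$ generated by the $H$-conjugates of $n$, whence $\langle H, n \rangle \cap N = (H\cap N)\langle n^H\rangle$. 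We must therefore find $n \in N$ with $(H\cap N)\langle n^H\rangle = N$.

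If $N$ is abelian, any nontrivial $n \in N$ works: $(H\cap N)\langle n^H\rangle$ is $H$-invariant by construction, and since $N$ is abelian it is also normalized by $N$, hence normal in $HN = G$; it is nontrivial (it contains $n$) and contained in $N$, so by minimality of $N$ it equals $N$, giving $\langle H, n\rangle = G$. This case already covers solvable $G$, since every minimal normal subgroup of a finite solvable group is elementary abelian.

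If $N$ is nonabelian, then $N \cong T^k$ for some nonabelian simple group $T$, and the argument is subtler because $(H\cap N)\langle n^H\rangle$ need not be normal in $N$, so minimality of $N$ cannot be invoked directly. Here one uses that, again by minimality, $H$ must permute the simple factors $T_1,\dots,T_k$ of $N$ transitively, and then one chooses $n$ concentrated in one factor (say $n = (t,1,\dots,1)$) so that the $H$-orbit of $n$, combined with $H\cap N$, fills every factor and respects the subdirect structure dictated by Goursat's lemma. This last step is the main obstacle, and typically rests on the fact that each $T_i$ is itself 2-generated, so that a sufficiently generic choice of $t$ forces the projections of the $H$-orbit of $n$ onto each factor to generate that factor, with no diagonal "gluing" surviving.
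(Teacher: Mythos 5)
This statement is quoted by the paper from the literature (Theorem~1.3 of the cited reference) and the paper gives no proof of it, so the only question is whether your argument is complete. The lower bound and the reduction of the upper bound to finding $n\in N$ with $(H\cap N)\langle n^H\rangle=N$ are correct (the identity $\langle H,n\rangle=H\langle n^H\rangle$ and the Dedekind-law computation of its intersection with $N$ both check out), and your abelian case is a complete and correct Gasch\"utz-style argument: the subgroup $(H\cap N)\langle n^H\rangle$ is $H$-invariant, centralized by the abelian $N$, hence normal in $G=HN$, nontrivial, and so equals $N$ by minimality. It is worth noting that every application of this theorem in the paper has $N$ abelian (indeed $N\cong\Z_p$ or an elementary abelian Sylow subgroup), so the case you actually proved covers everything the paper needs.

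However, as a proof of the stated theorem there is a genuine gap: the nonabelian case is only a sketch, and the step you yourself flag as "the main obstacle" is precisely the hard content of the theorem. Writing $N\cong T^k$ with $T$ nonabelian simple, the obstruction is that an $H$-invariant subgroup $X=(H\cap N)\langle n^H\rangle$ can be a proper subdirect subgroup of $T^k$ (a product of diagonal subgroups over a block system for the $H$-action on the factors) even when every projection $X\to T_i$ is surjective; minimality of $N$ gives you nothing here because $X$ need not be normal in $N$. Showing that some $n$ avoids all such diagonal configurations is not a matter of a "sufficiently generic $t$" --- in the literature it requires a genuine counting or structural argument comparing the number of maximal $H$-invariant subgroups of $N$ above $H\cap N$ with $|N|$, and it ultimately rests on classification-dependent facts about finite simple groups (their $2$-generation and more). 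As it stands, the nonabelian half is a plausible plan, not a proof, so the theorem in the generality stated remains unproved by your argument.
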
  
\begin{theorem}[Theorem 1.1,~\cite{lucchini_old}]\label{thm:unique}
    If $G$ is a non-cyclic finite group with a unique minimal normal subgroup $N$, then $d(G)= \max\{2, d(G/N)\}$.
\end{theorem}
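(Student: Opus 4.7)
The plan is to sandwich $d(G)$ between $\max\{2,d(G/N)\}$ on both sides, doing almost all the work on the upper bound. For the lower bound, any generating tuple of $G$ projects to a generating tuple of $G/N$, so $d(G)\geq d(G/N)$; and $d(G)\geq 2$ since $G$ is non-cyclic. For the upper bound I would first apply \Cref{thm:Andrea} to get $d(G)\leq d(G/N)+1$: when $d(G/N)\leq 1$ this already forces $d(G)\leq 2$, matching the lower bound, so the only substantive case is $d:=d(G/N)\geq 2$, where I must exhibit a $d$-element generating set of $G$.

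Fix a minimum generating tuple $\bar x_1,\dots,\bar x_d$ of $G/N$ with any lifts $x_i\in G$, and let $H:=\langle x_1,\dots,x_d\rangle$, so that $HN=G$. The goal is to adjust the lifts by elements of $N$, producing $H':=\langle x_1 n_1,\dots,x_d n_d\rangle$ with $N\leq H'$ (equivalently $H'=G$). The uniqueness hypothesis enters via the pivotal fact that \emph{every nontrivial normal subgroup of $G$ contains $N$}: this reduces the task to showing that for some $(n_1,\dots,n_d)\in N^d$, the intersection $H'\cap N$ is both $G$-normal and nontrivial. I would then split on the nature of $N$. If $N$ is abelian (hence elementary abelian $p$-group by the remark after \Cref{thm:schur}), then $H'\cap N$ is automatically $G$-normal: it is normalised by $H'$ trivially and by $N$ because $N$ is abelian, hence by $G=H'N$; so by the pivotal fact either $N\leq H'$ (done) or $H'$ is a complement of $N$, and in the latter subcase $N$ is irreducible as an $\F_p[H']$-module by minimality, which allows a Gasch\"utz-style cohomological modification of the $x_i$ to force a nontrivial element into $H'\cap N$. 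If $N$ is non-abelian, write $N=T_1\times\cdots\times T_k$ with $T_i$ isomorphic non-abelian simple factors permuted transitively by $G$ (transitivity forced again by uniqueness of $N$); here I would exploit the 2-generation of $T$ together with the transitive action on the factors to prescribe a single correction whose $H$-conjugates span all of $N$.

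The principal obstacle is the modification step: producing $n_1,\dots,n_d\in N$ that force $N\leq H'$. In the abelian case this is a cohomological calculation on the irreducible $\F_p[H']$-module $N$ (the naive trace operator $\sum_{i=0}^{m-1}\bar x_1^{\,i}$ on a single $\bar x_1$ of order $m$ may act as zero on $N$, so one genuinely needs the freedom to modify more than one generator); in the non-abelian case it rests on the delicate interplay between 2-generation of the simple factors $T_i$ and the way $H$ permutes them modulo $N$. These module-theoretic and simple-group-theoretic inputs form the technical heart of the theorem, while the uniqueness of $N$ serves throughout as the lever that converts a single nontrivial element of $H'\cap N$ into the full containment $N\leq H'$.
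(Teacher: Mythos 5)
The paper does not prove this statement at all: it is quoted verbatim as Theorem~1.1 of the cited work of Lucchini and used as a black box, so there is no internal proof to compare your argument against. Judged on its own terms, your outline has the right overall shape --- the lower bound, the reduction via \Cref{thm:Andrea} to the case $d(G/N)\geq 2$, the observation that uniqueness of $N$ makes every nontrivial normal subgroup contain $N$, and the split into abelian and non-abelian socle are all exactly how one does prove this theorem. Your local verifications are also correct: for abelian $N$ the subgroup $H'\cap N$ is indeed normal in $G=H'N$, so $H'$ is either all of $G$ or a complement of $N$.

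However, the two ``modification steps'' you defer are not routine finishing touches; they are the entire content of the theorem, and as written the proposal does not prove it. In the abelian case, what is actually needed is a bound on the \emph{number} of complements of $N$: one first shows $C_{H'}(N)=1$ (any nontrivial $C_{H'}(N)$ would be normal in $G$ yet intersect $N$ trivially, contradicting uniqueness), so $N$ is a faithful irreducible module for $G/N$, and then one needs the cohomological inequality $|H^1(G/N,N)|<|N|$ (Aschbacher--Guralnick) to conclude that the number of complements $|Z^1(G/N,N)|\leq |H^1(G/N,N)|\cdot|N|<|N|^2\leq |N|^{d}$, whence \Cref{thm:Andrea2} forces $d(G)=d(G/N)$. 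No amount of elementary ``adjusting the lifts'' substitutes for this bound --- your own remark about the trace operator vanishing is precisely the symptom that a genuine cohomological input is required. In the non-abelian case the gap is larger still: ``prescribe a single correction whose $H$-conjugates span all of $N$'' is not an argument, and the known proofs of this half rest on CFSG-dependent facts (2-generation of simple groups together with estimates on the number of generating pairs, or equivalently the theory of crowns), not on an ad hoc correction. So the proposal is a faithful roadmap of the published proof, but the theorem is not established by it.
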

\begin{theorem}[Satz 2,~\cite{gaschutz}]\label{thm:Andrea2}
Suppose $N$ is an abelian minimal normal subgroup of $G$, then $d(G)=d(G/N)+1$ iff $N$ is complemented in $G$ and the number of complements is $|N|^{d(G/N)}$.
\end{theorem}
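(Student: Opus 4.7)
The plan is to exploit \Cref{thm:Andrea}, which already pins $d(G)$ to the set $\{d(G/N),\,d(G/N)+1\}$, and reduce the equivalence to a counting argument about lifts of generators. Set $k=d(G/N)$, fix any generating $k$-tuple $(\bar g_1,\dots,\bar g_k)$ of $G/N$, and choose arbitrary preimages $g_1,\dots,g_k\in G$; the $|N|^k$ tuples of the form $(g_1n_1,\dots,g_kn_k)$ with $n_i\in N$ then enumerate every lift of $(\bar g_i)$. The equality $d(G)=d(G/N)+1$ is equivalent to the nonexistence of a generating $k$-tuple of $G$, which in turn reduces to counting, for this one fixed $(\bar g_i)$, the lifts that fail to generate $G$.

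I would then establish the central dichotomy: each such ``bad'' lift generates a complement of $N$, and this sets up a bijection. If $H=\langle g_1n_1,\dots,g_kn_k\rangle$, then $HN=G$ because the projections still generate $G/N$. The key structural claim is that $H\cap N$ is normal in $G$—it is $H$-invariant by construction and $N$-invariant because $N$ is abelian, hence normal in $HN=G$. Minimality of $N$ forces $H\cap N\in\{\{1\},N\}$, and ruling out $N$ (which would give $H=G$) shows $H$ is a complement. Conversely, for each complement $H$ of $N$ the unique tuple $(h_1,\dots,h_k)\in H^k$ with $h_i\in g_iN$ lies in $H$ and generates it, since the restriction of the quotient map to $H$ is an isomorphism $H\to G/N$ sending $h_i\mapsto\bar g_i$. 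These two constructions are mutually inverse.

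Therefore, if $c$ denotes the number of complements of $N$ in $G$, the fixed tuple $(\bar g_i)$ has exactly $c$ bad lifts and $|N|^k-c$ good ones; since this count does not depend on the choice of $(\bar g_i)$, we obtain $c\le|N|^{d(G/N)}$ unconditionally, and $d(G)=d(G/N)+1$ iff $c=|N|^{d(G/N)}$, which in turn forces $c>0$, i.e.\ $N$ is complemented. This is precisely the claimed equivalence. The step I expect to demand the most care is the normality of $H\cap N$ in $G$: this is where both hypotheses on $N$—being abelian and being minimal normal—enter crucially, for without the abelian assumption the intersection need not be $G$-normal, and without minimality the conclusion ``$H$ is a complement'' fails; either way the clean bijection with complements, and with it the whole count, would collapse.
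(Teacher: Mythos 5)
The paper does not prove this statement at all---it is imported verbatim as Satz~2 of Gasch\"utz's paper \cite{gaschutz}---so there is no in-paper argument to compare against. Your proof is correct and is in fact the classical Gasch\"utz counting argument: the bijection between complements of $N$ and the non-generating lifts of a fixed generating $k$-tuple of $G/N$ (with the normality of $H\cap N$ in $G=HN$ secured exactly as you say, by $H$-invariance plus the abelianness of $N$, and with minimality ruling out $H\cap N=N$) gives a count of bad lifts that is independent of the chosen tuple, and combining this with \Cref{thm:Andrea} yields the stated equivalence. The only point worth making explicit in a written-up version is the injectivity of the map from bad lifts to complements---two distinct tuples $(n_i)\neq(n_i')$ cannot generate the same complement $H$, because the quotient map restricted to $H$ is injective and both $g_in_i$ and $g_in_i'$ would be elements of $H$ mapping to $\bar g_i$---which is what makes the two constructions genuinely mutually inverse and the count exactly $c$ rather than merely at most $c$.
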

We use the following easy lemma a few times in this paper.
\begin{lemma}\label{thm:Cornell}
Let $G$ and $H$ be groups with $\gcd(|G|,|H|)=1$, then $d(G\times H)= \max\{d(G), d(H)\}$.
\end{lemma}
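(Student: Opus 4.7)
The plan is to establish the two inequalities separately. The lower bound $d(G \times H) \geq \max\{d(G), d(H)\}$ is immediate: any generating set of $G \times H$ projects onto a generating set of each factor via the coordinate projections $\pi_G$ and $\pi_H$, so neither $d(G)$ nor $d(H)$ can exceed $d(G \times H)$.

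For the upper bound, I would set $s = d(G)$ and $t = d(H)$ and assume without loss of generality that $s \geq t$. Given minimum generating sets $\{g_1, \ldots, g_s\}$ of $G$ and $\{h_1, \ldots, h_t\}$ of $H$, I would form the $s$ pairs $x_i = (g_i, h_i)$ for $1 \leq i \leq t$ and $x_i = (g_i, 1_H)$ for $t < i \leq s$, and let $K = \langle x_1, \ldots, x_s \rangle$. By construction $\pi_G(K) = G$ and $\pi_H(K) = H$, so the upper bound will follow once I prove the subclaim: any subgroup $K \leq G \times H$ which surjects onto both factors must equal $G \times H$ whenever $\gcd(|G|, |H|) = 1$.

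The subclaim would be proved by applying the first isomorphism theorem to $\pi_G|_K$ and $\pi_H|_K$: these yield $|K| = |G| \cdot |K \cap (\{1_G\} \times H)|$ and $|K| = |H| \cdot |K \cap (G \times \{1_H\})|$. Since the two intersections have orders dividing $|H|$ and $|G|$ respectively, equating the two expressions for $|K|$ and invoking $\gcd(|G|, |H|) = 1$ forces both intersections to attain their maximal possible order, yielding $|K| = |G||H|$.

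I do not anticipate any serious obstacle: the lemma is a compact bookkeeping argument, and the coprimality hypothesis is precisely what makes the projection strategy succeed. The only point requiring mild care is that the chosen pairing of generators keeps the two coordinate projections surjective, which is immediate from the explicit construction of the $x_i$.
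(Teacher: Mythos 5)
Your proof is correct and complete: the lower bound via the coordinate projections, the explicit pairing of generators, and the counting argument (first isomorphism theorem applied to both restricted projections, combined with coprimality to force $|K|=|G||H|$) all go through without issue. The paper states this lemma without proof, calling it ``easy,'' and your argument is exactly the standard one it implicitly relies on, so there is nothing to reconcile.
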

\paragraph*{\textbf{Groups of Cube-Free Order}} We present two structural results concerning groups whose order is cube-free. These include a classification of non-solvable groups of cube-free order and a structural description of odd cube-free order groups. Such groups have been well-studied in the literature (see \cite{diet,qiao,DietrichWilson}). 

\begin{theorem}[Theorem 3.10,~\cite{qiao}]\label{thm:unsolve}
A non-solvable group $G$ of cube-free order has the form
$$G=\mathrm{PSL}(2, p)\times L,$$
where $p$ is some suitable prime,  and $L$ is of odd order.
\end{theorem}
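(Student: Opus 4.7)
My plan is to show that any non-solvable cube-free $G$ has a unique non-abelian simple composition factor, isomorphic to $T = PSL(2,p)$, and then to peel $T$ off as a direct factor with complement $L$ of odd order.

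Step 1: Reducing to a single simple factor. I would let $R = \mathrm{Rad}(G)$ be the solvable radical and study the socle of $G/R$; since $G/R$ has trivial solvable radical, its socle decomposes as $T_1\times\cdots\times T_r$ with each $T_i$ a non-abelian simple group, and $G/R \hookrightarrow \Aut(T_1\times\cdots\times T_r)$. Every non-abelian finite simple group has order divisible by $4$ (a cyclic Sylow 2-subgroup would yield a normal 2-complement by Burnside's transfer theorem), so $r \ge 2$ would give $16 \mid |G|$, contradicting cube-freeness. Hence $\mathrm{Soc}(G/R) = T$ is a single non-abelian simple group with $T \trianglelefteq G/R \le \Aut(T)$ and $|T|$ cube-free.

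Step 2: Identifying $T$ as $PSL(2,p)$. I would invoke the Classification of Finite Simple Groups and list the non-abelian simple groups of cube-free order. Alternating groups beyond $A_5$, the sporadic groups, and the higher-rank Lie-type groups get ruled out by cube-free lower bounds on their Sylow 2-subgroups (or on some odd prime power). This leaves $T = PSL(2,p)$ for a prime $p$ with $|T|_2 = 4$ (forcing $p \equiv \pm 3 \pmod 8$) and further congruence constraints ensuring every other prime in $p(p^2-1)/2$ appears at most squared. Since $|\mathrm{Out}(PSL(2,p))| = 2$ and $|PGL(2,p)|_2 = 8$, any proper extension $T < G/R \le PGL(2,p)$ would again violate cube-freeness; therefore $G/R \cong T$.

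Step 3: Splitting the extension $1 \to R \to G \to T \to 1$ into a direct product. Comparing 2-parts gives $|G|_2 = |T|_2 = 4$, so $|R|$ is odd. I would argue $G \cong T \times R$ by induction on $|R|$. For the inductive step, I choose a minimal normal subgroup $A$ of $G$ contained in $R$ (an elementary abelian $q$-group of odd prime power order at most $q^2$), apply the hypothesis to $G/A \cong T \times R/A$, and lift the direct factor $T$ back to $G$. Three facts carry the argument: (i) $T$ acts trivially on such a small abelian chief factor $A$, because $\Aut(A) \le GL(2,q)$ is too small to contain a faithful copy of the simple group $T = PSL(2,p)$ for the admissible primes $q$; (ii) the Schur multiplier of $PSL(2,p)$ is $\Z_2$, so every central extension of $T$ by an odd-order abelian group splits; (iii) combining trivial $T$-action on $A$ and on $R/A$ with perfectness of $T$ and the Hall--Witt three-subgroup lemma forces $T$ to centralize $R$, giving $G = T \times L$ with $L := R$ of odd order.

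The main obstacle is Step 2: pinning down the primes $p$ for which $PSL(2,p)$ has cube-free order, and simultaneously ruling out every other finite simple group of cube-free order, rests squarely on CFSG and requires a careful order-wise case analysis of Lie-type families. Once that classification is in hand, Step 3 proceeds by fairly standard extension-theoretic machinery, although the embedding check in fact (i) still requires a short prime-by-prime verification using the known orders of $PSL(2,p)$ and $GL(2,q)$.
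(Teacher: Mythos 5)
This theorem is not proved in the paper at all: it is imported verbatim from the reference \cite{qiao} (Theorem~3.10 there), so there is no internal argument to compare yours against. Your outline follows what is essentially the standard route taken in the literature on cube-free groups: bound the number of simple components of the socle of $G/\mathrm{Rad}(G)$ by the $2$-part of $|G|$ (using that every non-abelian simple group has order divisible by $4$), invoke CFSG to see that the only non-abelian simple groups of cube-free order are the groups $PSL(2,p)$ with $p\equiv \pm 3 \pmod 8$ and suitable congruence conditions on the odd part, exclude the outer half of $PGL(2,p)$ by comparing $2$-parts again, and then split the extension over the odd-order radical. This is a sound plan; the two places you flag as heavy (the CFSG case analysis and the representation-theoretic checks) are indeed where the real work lies, and they are standard.

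One step is stated too weakly to be correct as written. In fact~(i) of Step~3, to conclude that $T$ acts trivially on a chief factor $A\le R$ it does not suffice that $GL(2,q)$ contains no \emph{subgroup} isomorphic to $T$: the relevant group is $G/C_G(A)$, which embeds in $\Aut(A)\le GL(2,q)$, and you must rule out $T$ occurring as a \emph{composition factor} of it. A priori $T$ could arise as a quotient of a subgroup without being a subgroup; for instance $SL(2,5)\le GL(2,q)$ surjects onto $A_5$ but does not contain it. The gap is repairable precisely because $G/C_G(A)$ is a quotient of $G$ and hence has cube-free order: by Dickson's classification of subgroups of $PGL(2,q)$, any subgroup of $GL(2,q)$ ($q$ an odd prime) involving $A_5$ must contain $SL(2,5)$, whose order is divisible by $8$; and the only other admissible simple section is $PSL(2,q)$ itself, which together with $|A|=q^2$ forces $q^3\mid |G|$. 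Both contradict cube-freeness, so the action is indeed trivial. Once that is fixed, the rest of Step~3 simplifies: the preimage $M$ of the direct factor $T$ of $G/A$ is a central extension of $T$ by the odd-order group $A$, hence splits as $T_0\times A$ with $T_0=M'$ perfect and normal in $G$; then $T_0\cap R=1$ gives $[T_0,R]\le T_0\cap R=1$ and $G=T_0\times R$ directly, with no need for the three-subgroup lemma.
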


\begin{cor}[Corollary 3.4,~\cite{qiao}]\label{cor:tower}
Let $G$ be a group of odd cube-free order. Then
$$G \cong S_{p_1} \rtimes \left( S_{p_2} \rtimes \left( \cdots \rtimes \left( S_{p_{k-1}} \rtimes S_{p_k} \right)\right) \right)
,$$ where $p_1 > p_2 > \cdots > p_k$ are the distinct primes dividing $|G|$.
\end{cor}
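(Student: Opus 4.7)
The plan is to argue by induction on $|G|$, with the base case $|G|=1$ being trivial. The reduction I aim for is to show that the Sylow subgroup $S_{p_1}$ for the largest prime divisor $p_1$ is normal in $G$. Once that is in hand, the condition $\gcd(|S_{p_1}|,[G:S_{p_1}])=1$ together with the Schur--Zassenhaus theorem (\Cref{thm:schur}) yields a complement $H$ with $G=S_{p_1}\rtimes H$. Since $H$ has odd cube-free order with set of prime divisors $\{p_2,\ldots,p_k\}$, the inductive hypothesis applied to $H$ produces the iterated semidirect product on the right-hand side, completing the proof.

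So the main work is to prove $S_{p_1}\trianglelefteq G$. Since $|G|$ is odd, the Feit--Thompson theorem makes $G$ solvable, so $G$ admits a minimal normal subgroup $N$, which is elementary abelian of exponent some prime $p$, and cube-freeness forces $|N|\in\{p,p^2\}$. I would then split on whether $p=p_1$ or $p<p_1$. If $p=p_1$, then either $N=S_{p_1}$ (done), or $|N|=p_1$ with $|S_{p_1}|=p_1^2$; in the latter case $S_{p_1}/N$ is the Sylow $p_1$-subgroup of $G/N$, which has odd cube-free order, so induction gives $S_{p_1}/N\trianglelefteq G/N$, and the correspondence theorem lifts this to $S_{p_1}\trianglelefteq G$.

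The harder case is $p<p_1$. Induction applied to $G/N$ gives the iterated semidirect structure there, and in particular $S_{p_1}N/N$ is a normal subgroup of $G/N$, so $M:=S_{p_1}N\trianglelefteq G$. Inside $M$ I would count Sylow $p_1$-subgroups: $n_{p_1}(M)$ divides $|N|=p^a$ with $a\leq 2$, and $n_{p_1}(M)\equiv 1\pmod{p_1}$. The nontrivial possibilities would force $p_1$ to divide one of $p-1$, $p$, or $p^2-1=(p-1)(p+1)$; but $p_1\mid p$ is excluded since $p\neq p_1$, $p_1\mid p-1$ is excluded because $0<p-1<p_1$, and $p_1\mid p+1$ is excluded because $p+1$ is even while $p_1$ is odd. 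Hence $n_{p_1}(M)=1$, so $S_{p_1}$ is characteristic in $M$, and since $M\trianglelefteq G$ this gives $S_{p_1}\trianglelefteq G$.

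The principal obstacle I anticipate is precisely this Sylow-counting step in the subcase $p<p_1$: the parity argument ruling out $p_1\mid p+1$ relies crucially on $|G|$ being odd, which is consistent with (and in fact isolates the necessity of) the hypothesis of the corollary. With normality of $S_{p_1}$ secured, Schur--Zassenhaus followed by the inductive hypothesis applied to the complement $H$ delivers the full iterated semidirect product structure claimed.
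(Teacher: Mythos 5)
The paper does not prove this statement; it is imported verbatim from \cite{qiao} (Corollary 3.4 there), so there is no internal proof to compare against. Your argument is, however, a correct and self-contained derivation. The reduction to showing $S_{p_1}\trianglelefteq G$, followed by Schur--Zassenhaus and induction applied to the complement, is the right skeleton, and the key step --- counting Sylow $p_1$-subgroups inside $M=S_{p_1}N$ for a minimal normal $p$-subgroup $N$ with $p<p_1$ --- is sound: $n_{p_1}(M)$ divides $[M:S_{p_1}]=|N|\in\{p,p^2\}$ and is $\equiv 1\pmod{p_1}$, and both $p_1\mid p-1$ and $p_1\mid p+1$ are impossible, the latter precisely because it would force $p_1=p+1$ to be even while $|G|$ is odd; this is where the odd-order hypothesis genuinely enters. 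Two small remarks, neither of which is a gap. First, you invoke Feit--Thompson for solvability; in the context of this paper the same conclusion follows much more cheaply from \Cref{thm:unsolve}, since a non-solvable cube-free group has a direct factor $PSL(2,p)$ of even order. Second, in the subcase $p=p_1$, $|N|=p_1$, $|S_{p_1}|=p_1^2$, you should say explicitly that $N\leq S_{p_1}$ (a normal $p_1$-subgroup lies in every Sylow $p_1$-subgroup), so that $S_{p_1}$ really is the full preimage of the normal Sylow $p_1$-subgroup of $G/N$ and the lift through the correspondence theorem applies.
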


\section{\texorpdfstring{$2$}{}-Generated Numbers and Groups of Cube-Free Order}\label{sec:cube}
Recall that a positive integer $n$ is called a 2-generated number if every group in $\mathcal{G}_n$ can be generated by two elements (\Cref{def-2Generation}). In this section, we first observe that if $n$ is a $2$-generated number, then $n$ must be cube-free (\Cref{pro:cubefree}). We then determine a necessary and sufficient condition under which a cube-free number of the form  $p^{a}q^{b}$, with $p$ and $q$ being distinct primes, is $2$-generated.

\begin{pro}\label{pro:cubefree}
If a positive integer $n$ is a $2$-generated number, then $n$ is cube-free.
\end{pro}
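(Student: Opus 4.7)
The plan is to prove the contrapositive: if $n$ is not cube-free, I exhibit a single group of order $n$ that cannot be generated by two elements. So assume some prime $p$ satisfies $p^3 \mid n$ and write $n = p^e m$ with $e \geq 3$ and $\gcd(p,m) = 1$.

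My candidate is the abelian group
\[
G \;=\; (\Z/p\Z)^3 \times \Z/p^{e-3}\Z \times \Z/m\Z,
\]
which has order $n$; in the edge case $e = 3$ the middle factor is trivial, so the construction is still valid. The goal is to show $d(G) \geq 3$. I would do this by examining the Sylow $p$-subgroup $A = (\Z/p\Z)^3 \times \Z/p^{e-3}\Z$. Its Frattini quotient $A/\Phi(A) = A/pA$ is an elementary abelian $p$-group of $\F_p$-rank at least $3$, so any generating set of $A$ projects onto a spanning set of a $3$-dimensional vector space, forcing $d(A) \geq 3$. Applying \Cref{thm:Cornell} to the coprime decomposition $G = A \times (\Z/m\Z)$ then yields
\[
d(G) \;=\; \max\{\,d(A),\, d(\Z/m\Z)\,\} \;\geq\; 3,
\]
so $G$ is not $2$-generated, and hence $n$ is not a $2$-generated number.

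I do not foresee any real obstacle here: the argument reduces to one direct construction plus the standard Burnside basis observation for finite abelian $p$-groups. The only points needing minor care are the degenerate case $e=3$ (where $\Z/p^{e-3}\Z$ collapses to the trivial group and $A$ is already elementary abelian of rank $3$) and a quick verification of the coprimality hypothesis $\gcd(|A|, m) = 1$ that justifies the use of \Cref{thm:Cornell}.
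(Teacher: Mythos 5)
Your proposal is correct and follows essentially the same route as the paper: your group $(\Z/p\Z)^3 \times \Z/p^{e-3}\Z \times \Z/m\Z$ is isomorphic (by the Chinese Remainder Theorem) to the paper's $(\Z_p\times\Z_p\times\Z_p)\times \Z_{n/p^3}$. The only difference is that you spell out, via the Frattini quotient and Lemma~\ref{thm:Cornell}, the claim $d(G)\geq 3$ that the paper simply asserts.
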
 
\begin{proof}
Suppose, in contrast, that $n$ is not cube-free. Let $p$ be a prime such that $p^k\mid n$ and $k\geq 3$. Note that $d(\Z_p\times \Z_p\times \Z_p)=3$. Consider a group $G=(\Z_p\times \Z_p\times \Z_p)\times \Z_{n/p^3}$. Note that $G$ is of order $n$, and $d(G) \geq 3$. Thus $n$ is not a $2$-generated number.
\end{proof}

\begin{obs}\label{cor:d3}
Let $G$ be a group of cube-free order. Then $d(G)\leq 3$.  In particular, square-free groups are 2-generated.
\end{obs}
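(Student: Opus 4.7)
The plan is to derive $d(G)\leq 3$ for all cube-free $G$ by strong induction on $|G|$, with the square-free clause falling out separately (and more sharply) from Theorem~\ref{thm:zgroup}. For the square-free part: if $|G|$ is square-free then every Sylow subgroup of $G$ is of prime order and hence cyclic, so $G$ is a $Z$-group and Theorem~\ref{thm:zgroup} produces a two-element generating set, giving $d(G)\leq 2$.

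For the main inequality I would split the inductive step by solvability. The non-solvable case is easy: Theorem~\ref{thm:unsolve} writes $G\cong PSL(2,p)\times L$ with $|L|$ odd; since $|PSL(2,p)|$ is always even, $\gcd(|PSL(2,p)|,|L|)=1$, so Lemma~\ref{thm:Cornell} yields $d(G)=\max\{d(PSL(2,p)),d(L)\}$, where the simple factor is 2-generated and $d(L)\leq 3$ by the induction hypothesis. In the solvable case I pick a minimal normal subgroup $N$ of $G$; by solvability and cube-freeness, $N$ is elementary abelian of order $p$ or $p^2$. The induction hypothesis gives $d(G/N)\leq 3$, so Theorem~\ref{thm:Andrea} already yields $d(G)\leq 4$ — one short of the target. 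When $N$ happens to be the \emph{unique} minimal normal subgroup of $G$, Theorem~\ref{thm:unique} sharpens this immediately to $d(G)=\max\{2,d(G/N)\}\leq 3$ and disposes of that subcase.

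The main obstacle is the remaining solvable subcase, where $G$ has two distinct minimal normal subgroups $N_1\neq N_2$, forcing $N_1\cap N_2=\{1\}$ and giving an embedding $G\hookrightarrow G/N_1\times G/N_2$. My plan is to close the gap from $4$ down to $3$ by contradiction: assuming $d(G)=4$, one must have $d(G)=d(G/N_i)+1$ for each $i$, and Theorem~\ref{thm:Andrea2} (Gasch\"utz) then forces each $N_i$ to be complemented in $G$ with exactly $|N_i|^{d(G/N_i)}=|N_i|^{3}$ complements. Intersecting a complement of $N_i$ with the (normal, cube-free) Sylow $p$-subgroup $P$ of $G$ — whose order is capped at $p^{2}$ — produces a $p$-subgroup of $P$ whose shape is constrained to a small, finite family of ``lines'' inside $P\cong \mathbb{F}_{p}^{2}$; a counting comparison between the Gasch\"utz count $|N_{i}|^{3}$ and the very limited supply of such intersections yields an arithmetic incompatibility, ruling out $d(G)=4$.

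This Gasch\"utz-plus-arithmetic step is the delicate part: the naive inductive bound from Theorem~\ref{thm:Andrea} stops at $4$, and the subadditive bound $d(G)\leq d(M)+d(G/M)$ for $M=N_{1}N_{2}$ gives at best $5$, so the reduction to $3$ genuinely requires exploiting the \emph{quantitative} complement-count in Theorem~\ref{thm:Andrea2} together with the rigid $p$- or $p^{2}$-restriction on Sylow subgroups coming from cube-freeness. Everything else in the proof is a direct application of the structural and generation-theoretic preliminaries already recorded above.
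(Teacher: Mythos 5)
Your square-free clause (via Theorem~\ref{thm:zgroup}), your non-solvable case, and your unique-minimal-normal-subgroup case are all fine, but the proof has a genuine gap at exactly the step you flag as delicate: the solvable subcase with $d(G/N)=3$ is never actually closed. For comparison, the paper disposes of the whole observation in two lines by quoting a theorem of Lucchini, $d(G)\le \max_p d(S_p)+1$, together with $d(S_p)\le 2$ for cube-free orders; your route in effect asks you to reprove the solvable case of that (nontrivial) theorem, and the mechanism you sketch does not do it. Concretely: (a) you intersect complements with ``the (normal, cube-free) Sylow $p$-subgroup $P$ of $G$,'' but Sylow subgroups of a solvable group need not be normal, and when $|N|=p$ the normal $p$-subgroup you actually have available, $O_p(G)$, may be just $N$ itself, so there need be no ambient normal $P\cong\mathbb{F}_p^{2}$ to work inside; (b) even when such a $P$ exists, the map $H\mapsto H\cap P$ from the $|N|^{3}$ complements to the $p$ lines complementing $N$ in $P$ can have large fibres (nothing in your count controls how many complements contain a fixed line), so comparing $|N|^{3}$ with the number of lines is not by itself an arithmetic incompatibility.

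A correct version of your Gasch\"utz step does exist, but it runs through a cocycle count rather than through intersections: the complements of the abelian normal subgroup $N$ are in bijection with $Z^1(H_0,N)$ for any fixed complement $H_0$, and $|Z^1(H_0,N)|=|B^1(H_0,N)|\cdot|H^1(H_0,N)|\le |N|\cdot |Z^1(Q,N)|$, where $Q$ is a Sylow $p$-subgroup of $H_0$ and one uses that restriction $H^1(H_0,N)\to H^1(Q,N)$ is injective because $H^1(H_0,N)$ is $p$-torsion and $[H_0:Q]$ is prime to $p$. Cube-freeness then forces $d(Q)\le 1$ when $|N|=p$ (at most $p\cdot p=p^{2}<p^{3}$ complements) and forces $p\nmid|H_0|$, hence $H^1(H_0,N)=0$, when $|N|=p^{2}$ (at most $p^{2}<p^{6}$ complements), contradicting the Gasch\"utz count in either case. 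Note that this argument needs no second minimal normal subgroup, so your split into ``unique'' versus ``two distinct'' minimal normal subgroups is unnecessary; but it does require restriction--corestriction machinery that appears neither in your sketch nor in the paper's preliminaries. As written, the proposal is an outline with its central step unproved and its proposed mechanism flawed, so it does not constitute a proof.
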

\begin{proof}
Note that any Sylow $p$-subgroup of a group $G$ with cube-free order is cyclic or isomorphic to $\Z_p\times \Z_p$ for some prime $p$. By~\cite[Theorem 1]{lucchini}, we have $d(G)\leq \max_{p\, \, prime}d(S_p)+1$, where $S_p$ denotes a Sylow $p$-subgroup of $G$. Thus, the result follows.
\end{proof}
To study when a cube-free positive integer of the form $n = p^a q^b$, where $p$ and $q$ are distinct primes, is a 2-generated number, we start by considering the case when $a + b \leq 2$. In this case, $n$ is either a prime power or a square-free number. Every group of order $p^2$ is isomorphic to either $\mathbb{Z}_{p^2}$ or $\mathbb{Z}_p \times \mathbb{Z}_p$, both of which are 2-generated. Also, as noted in Observation~\ref{cor:d3}, groups of square-free order are 2-generated.
Now, in the following discussion, we will consider the cases when $n$ satisfies $a + b=3$.
\subsection{Groups of Order \texorpdfstring{$pq^2$}{} with \texorpdfstring{$p>q$}{}}
In this section, we prove that every group of order $pq^2$, where $p>q$ and $(p, q) \neq (3,2)$, is metacyclic. Metacyclic groups have been completely classified in~\cite{meta}.

\begin{theorem}\label{thm:pq}
         Let $n=pq^2$, where $p>q$ and $(p,q)\neq (3,2)$. Then every group of order $n$ is metacyclic.
    \end{theorem}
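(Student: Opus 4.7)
The plan is to exploit Sylow's theorems together with Schur--Zassenhaus to produce a cyclic normal subgroup with cyclic quotient. First I would show that the Sylow $p$-subgroup $S_p$ is normal in $G$. Sylow's theorems force $n_p \in \{1, q, q^2\}$; the value $q$ is ruled out by $p > q$, and $n_p = q^2$ would require $p \mid q^2 - 1 = (q-1)(q+1)$, which together with $p > q$ forces $p = q+1$ and hence $(p,q) = (3,2)$, the case excluded by hypothesis. So $S_p \triangleleft G$ and $S_p$ is cyclic of order $p$.

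Since $\gcd(p, q^2) = 1$, \Cref{thm:schur} yields a complement $H$ with $|H| = q^2$, so $G = S_p \rtimes H$. There are two possibilities for $H$: the cyclic group $\Z_{q^2}$ or the elementary abelian group $\Z_q \times \Z_q$. In the cyclic case, $S_p$ itself serves as the metacyclic kernel, since $G/S_p \cong H$ is cyclic, and we are done.

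The remaining case $H \cong \Z_q \times \Z_q$ is the main obstacle. Here I would analyze the conjugation action $\phi : H \to \Aut(S_p) \cong \Z_{p-1}$. Since $H$ is elementary abelian and the target is cyclic, the image of $\phi$ is a cyclic elementary abelian $q$-group, hence of order $1$ or $q$. Thus $K := \ker \phi$ has order $q$ or $q^2$, and in either case it contains a subgroup $L$ of order $q$ that centralizes $S_p$. Then $N := S_p L$ is abelian of order $pq$, and therefore cyclic. To check $N \triangleleft G$, split into subcases: if $|K| = q$, then $L = K$ and $N = C_G(S_p)$, which is normal as the centralizer of a normal subgroup; if $|K| = q^2$, then $\phi$ is trivial and $G = S_p \times H$ is abelian, so $N$ is automatically normal. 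In both subcases $G/N$ has order $q$ and hence is cyclic, making $G$ metacyclic. The crux of the argument is recognizing that the cyclicity of $\Aut(\Z_p)$ forces the image of $\phi$ to be small enough that a piece of the centralizer can always be combined with $S_p$ to form the required cyclic normal subgroup of index $q$.
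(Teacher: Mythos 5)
Your proof is correct and follows essentially the same route as the paper: normality of $S_p$ via Sylow (with the same exclusion of $(3,2)$), the split extension $S_p \rtimes H$, and, in the elementary abelian case, using the cyclicity of $\Aut(\Z_p)$ to extract an order-$q$ subgroup centralizing $S_p$ that combines with $S_p$ into a cyclic normal subgroup of index $q$. Your explicit normality check via $C_G(S_p)$ and the separate treatment of the trivial-action subcase are minor tidy-ups of the paper's argument, not a different approach.
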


\begin{proof}
Let $G$ be a finite group of order $pq^2$, where $p>q$ are distinct primes and $(p, q) \ne (3, 2)$. We aim to show that $G$ is metacyclic. 

If $G$ is abelian, then either $G\cong \mathbb{Z}_{pq^2}$ or $G\cong \mathbb{Z}_{pq}\times \mathbb{Z}_q$. Hence $G$ is metacyclic. 

Assume now that $G$ is non-abelian. By Burnside's $p^aq^b$ Theorem, $G$ is solvable. Thus any minimal normal subgroup $N$ of $G$ is an elementary abelian $r$-group for some prime $r$ dividing $pq^2$, and therefore $
N\cong \Z_p,\quad \Z_q,\quad \text{or}\quad \Z_q\times \Z_q.
$ Suppose first that $N$ is cyclic. The conjugation action of 
$G$ on $N$ induces a natural homomorphism and gives an embedding
\[
G/C_G(N) \hookrightarrow \Aut(N)
,\] where $\Aut(N)$ is cyclic. So we conclude that $G/C_G(N)$ is cyclic. It is not difficult to show that $C_G(N)$ has to be cyclic, and so $G$ is metacyclic.

Now we assume $N\cong \Z_q\times \Z_q$. Then $N=C_G(N)$ because $G$ is non-abelian. Therefore $G/N\cong \mathbb{Z}_p$, and $G/N$ embeds in $\mathrm{GL}(2,q)$. Hence $p\mid (q^2-1)$. Since $p>q$, it follows that $p\mid (q+1)$, forcing $p=q+1$. Thus $(p,q)=(3,2)$, but this case is excluded by the hypothesis. Therefore, this situation cannot occur. Thus the only possible case is that $G$ has a cyclic normal subgroup with cyclic quotient, and hence $G$ is metacyclic.
\end{proof}
We now check the groups of order $n=pq^2$ with $p=3$ and $q=2$, that is, $n=12$.
\begin{rem}\label{rem:12}
Every group of order $12$ is $2$-generated.
\end{rem}
\begin{proof} 
Every group of order 12 is 2-generated. Note that every group of order 12 is isomorphic to one of the groups $\Z_{12}$,  $\Z_2\times \Z_2 \times \Z_3$, 
$A_4$,  $D_6$,  or the non-trivial semidirect product $Z_3\rtimes Z_4$. Thus, there are exactly 5 groups up to isomorphism (see~\cite[Theorem 4.24]{rotman}, for details). Clearly, they are all 2-generated.
\end{proof}

Based on the above discussion,  we deduce the following result.
\begin{theorem}\label{thm:pq^2}
    Let $n=pq^2$ where $p, q$ are primes and $p>q$. Then $n$ is a 2-generated number.
\end{theorem}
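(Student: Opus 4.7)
The plan is to simply assemble the two ingredients the paper has already prepared and split on whether the exceptional pair $(p,q) = (3,2)$ occurs. Since the theorem asserts a fact about \emph{every} group $G$ of order $n = pq^2$ with $p > q$, I would begin by fixing such a $G$ and distinguishing the cases $(p,q) = (3,2)$ and $(p,q) \neq (3,2)$.

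In the exceptional case $(p,q) = (3,2)$, we have $n = 12$, and \Cref{rem:12} directly gives that every group of order $12$ is $2$-generated, so there is nothing more to do. In the generic case $(p,q) \neq (3,2)$, I would invoke \Cref{thm:pq}, which asserts that $G$ must be metacyclic. Then, using the Zassenhaus presentation of a finite metacyclic group recorded after \Cref{thm:schur} in \Cref{sec:prelim} (i.e., a presentation on two generators $x,y$ with three defining relations), we conclude that $d(G) \leq 2$, so $G$ is $2$-generated. Combining the two cases, every group of order $n = pq^2$ with $p > q$ is $2$-generated, which is exactly the statement that $n$ is a $2$-generated number.

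There is essentially no genuine obstacle here: the hard work has already been performed in \Cref{thm:pq} (the Sylow-theoretic analysis producing the metacyclic structure) and in \Cref{rem:12} (the enumeration of groups of order $12$). The only thing to be careful about is to make the case split explicit so that the reader sees why the assumption $(p,q) \neq (3,2)$ in \Cref{thm:pq} does not weaken the present statement, and to cite the metacyclic presentation to justify the final step $d(G) \leq 2$ rather than leaving it implicit.
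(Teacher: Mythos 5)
Your proposal is correct and follows exactly the paper's own argument: the paper likewise derives the result by combining \Cref{thm:pq} (metacyclic, hence $2$-generated via the two-generator presentation) with \Cref{rem:12} for the exceptional case $n=12$. Your write-up merely makes the case split and the citation of the metacyclic presentation more explicit.
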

\begin{proof}
Since metacyclic groups are 2-generated, the desired result follows directly from Theorem~\ref{thm:pq} and Remark~\ref{rem:12}.
\end{proof}

\subsection{Groups of Order \texorpdfstring{$p^2q$}{} with \texorpdfstring{$p>q$}{}}
\begin{theorem}\label{thm:np^2q}
 Let $n=p^2q$ where $p, q$ are primes and $p>q$. Then $n$ is a 2-generated number if and only if $q\nmid (p-1)$.   
\end{theorem}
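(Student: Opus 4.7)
The plan is to treat the two directions separately. For the ``only if'' direction, I construct an explicit group of order $p^2q$ that needs three generators whenever $q \mid p-1$. For the ``if'' direction, I carry out a Sylow analysis that splits into a handful of cases, closing each either by exhibiting an abelian or cyclic structure, or by invoking \Cref{thm:unique}.

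For the forward direction, under the assumption $q \mid p-1$, I pick $\lambda \in \F_p^*$ of order $q$ and form $G = (\Z_p \times \Z_p) \rtimes \Z_q$, where a generator $t$ of $\Z_q$ acts as the scalar $\lambda$ on $N := \Z_p \times \Z_p$. To show $d(G) \ge 3$, I assume for contradiction that $G = \langle g_1, g_2 \rangle$. Projecting to $G/N \cong \Z_q$, at least one $g_i$, say $g_2$, has non-trivial image; after replacing $g_1$ by a suitable $g_1 g_2^{-k}$, I may assume $g_1 \in N$ and $g_2 = wt$ for some $w \in N$. Two short computations then finish it: using $1 + \lambda + \dots + \lambda^{q-1} = 0$ in $\F_p$ one obtains $g_2^q = 1$, so $|\langle g_2\rangle| = q$, and the relation $g_2^i g_1 g_2^{-i} = \lambda^i g_1$ forces every conjugate of $g_1$ into the line $\F_p g_1 \subseteq N$. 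Hence $\langle g_1, g_2 \rangle \subseteq (\F_p g_1)\langle g_2\rangle$ has order at most $pq < p^2q$, contradicting $G = \langle g_1, g_2\rangle$.

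For the reverse direction, assume $q \nmid p-1$ and let $G$ be a group of order $p^2q$. Since $p > q$, Sylow's theorems force $n_p = 1$ (if $n_p = q$ then $p \mid q-1$, impossible), so $S_p \trianglelefteq G$ and $G = S_p \rtimes S_q$. I split by the isomorphism type of $S_p$. When $S_p \cong \Z_{p^2}$, $|\Aut(S_p)| = p(p-1)$ is coprime to $q$, so the action is trivial and $G \cong \Z_{p^2 q}$ is cyclic. When $S_p \cong \Z_p \times \Z_p$ with trivial $S_q$-action, $G \cong (\Z_p)^2 \times \Z_q$ is abelian and 2-generated. The remaining case, $S_p \cong \Z_p \times \Z_p$ with non-trivial $S_q$-action, is the crux: the hypothesis $q \nmid p-1$ means that a generator of the image in $GL(2, p)$ has no eigenvalues in $\F_p$, so the action is irreducible on $(\F_p)^2$. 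Thus $N := S_p$ is a minimal normal subgroup of $G$. I then verify it is the unique one: any other minimal normal subgroup $M$ would satisfy $M \cap N = \{1\}$ by the minimality of $N$, hence $|M| = q$; but then $[M, N] \subseteq M \cap N = \{1\}$ would force $M$ to centralize $N$, making the action trivial — a contradiction. Applying \Cref{thm:unique} gives $d(G) = \max\{2, d(G/N)\} = \max\{2, d(\Z_q)\} = 2$.

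The main obstacle I anticipate is the non-trivial-action subcase: establishing irreducibility of the $S_q$-action from $q \nmid p-1$, and then pinning down the uniqueness of the minimal normal subgroup before invoking \Cref{thm:unique}. Everything else, including the Sylow count and the cyclic-Sylow case, is routine bookkeeping.
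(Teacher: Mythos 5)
Your proposal is correct, and both halves take a genuinely different route from the paper's. For the direction $q \mid (p-1) \Rightarrow$ not 2-generated, you build the same group (scalar action of $\Z_q$ on $\Z_p \times \Z_p$), but where the paper runs a case analysis on the orders of a hypothetical generating pair (order $p$/$p$, $p$/$q$, $q$/$q$, reducing the last case to the second), you normalize the pair so that $g_1 \in N$ and observe that all $g_2$-conjugates of $g_1$ stay on the line $\F_p g_1$, which is normal in $G$ precisely because the action is scalar; this traps $\langle g_1, g_2\rangle$ inside a subgroup of order at most $pq$ in one stroke --- a cleaner closing argument. For the direction $q \nmid (p-1) \Rightarrow$ 2-generated, the paper counts elements to show $G$ has no subgroup of order $pq$, concludes $S_q$ is maximal, and generates $G$ by $S_q$ together with one outside element; you instead prove the $S_q$-action on $S_p \cong \Z_p\times\Z_p$ is irreducible, identify $S_p$ as the unique minimal normal subgroup, and invoke \Cref{thm:unique}. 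Your route is closer in spirit to how the paper later handles the general odd case in \Cref{thm:odd} (via minimal normal subgroups and \Cref{thm:Andrea}, \Cref{thm:Andrea2}, \Cref{thm:unique}), while the paper's counting argument is more elementary and self-contained. One step you should spell out: ``no eigenvalues in $\F_p$'' requires a word to exclude the eigenvalue $1$. Since $\F_p^*$ has no element of order $q$, any eigenvalue in $\F_p$ of an order-$q$ matrix must equal $1$; such a matrix would fix a line and act trivially on the quotient line, hence be unipotent, hence have order dividing $p$ --- contradicting order $q$. With that sentence added, the irreducibility claim (and therefore the whole argument) is airtight.
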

\begin{proof}
Suppose that $q\nmid (p-1)$. We will show that $n$ is a $2$-generated number. 
Note that if $G$ is an abelian group, then it is always $2$-generated. 
So, we can assume that $G$ is a non-abelian group of order $n$. 
Let $N$ be a minimal normal subgroup of $G$. 
We follow the same argument as in the proof of Theorem~\ref{thm:pq}. 
First note that $N$ cannot be cyclic, because in that case $|G/C_G(N)|$ divides $q-1$, so we get a contradiction as $p>q.$
Thus, we are only left with the case where $N = C_G(N) \cong \Z_p \times \Z_p$ 
(otherwise $G$ is abelian). 
Hence $G/C_G(N) \cong \Z_q$. 
Now we can apply~\Cref{thm:Andrea} to $G/N$, and we obtain $d(G)\leq 2$. 
Therefore $G$ is $2$-generated. 
Thus we conclude that any group of order $n$ is $2$-generated.

We now consider the other direction. Namely, we assume that $q|(p-1)$ and show that $n$ is not a 2-generated number by constructing a group $G$ of order $n$ which is not $2$-generated.
Since $q|(p-1)$, there is an element $a\in \Z_p^*$ of order $q$. We now define a homomorphism $\phi:\Z_q\longrightarrow \Aut(\Z_p\times \Z_p)$ as follows: Let $z\in \Z_q$, the image of $z$ under $\phi$, denoted $\phi_z$ maps $x\in \Z_p\times \Z_p$ to $a^zx$. One can easily verify that $\phi_z$ is an automorphism of  $\Z_p\times \Z_p$ and $\phi$ is an injective homomorphism.

Let $G=(\Z_p\times \Z_p)\rtimes_{\phi}\Z_q$. It is easy to check that an element $(x,0)$ has order $p$, where $x$ is a non-identity element of $\Z_p\times \Z_p$ (here $0$ denotes the identity in $\Z_q$). For an integer $s> 0$, we note that $(x,z)^s=((1+a^z+a^{2z}+\ldots+a^{(s-1)z})x,sz)$. We observe that 
\begin{equation}\label{eq-nil}
    (1-a^z)(1+a^z+a^{2z}+\ldots+a^{(s-1)z})=1-a^{sz}.
\end{equation}
 If $z\neq 0$, then for $s$ to be the order of $(x,z)$, we need $s$ to be a multiple of $q$. On the other hand, with $s=q$, the right-hand side of \Cref{eq-nil} is $0 \mod p$. Also as $(1-a^z)\not\equiv 0 \mod p$, $(1+a^z+a^{2z}+\ldots+a^{(s-1)z})\equiv 0 \mod p$. Thus, if $z$ is non-zero, then $(x,z)$ has order $q$ for all $x\in \Z_p\times \Z_p$ and if $x$ is non-identity then $(x,0)$ has order $p.$ These observations imply that
 \begin{description}
     \item[(a)] there are no elements of order $pq$ in $G$,
     \item[(b)] any subgroup  $P$ of order $p$ in $G$ has to be cyclic and normal.
 \end{description}
 Now we will show that $G$ is not 2-generated. Elements of order $p$ in $G$ will be elements of the unique Sylow $p$-subgroup of $G$ which is isomorphic to $\Z_p\times \Z_p.$ Thus no two elements of order $p$ can generate $G.$ If $o(g_1)=p$ and $o(g_2)=q$, then $\langle g_1\rangle \lhd G$, and therefore $|\langle g_1,g_2\rangle|=|\langle g_1\rangle \langle g_2\rangle|=pq$. Hence, they cannot generate $G$.
 
Lastly, we suppose that $o(g_1)=q$ and $o(g_2)=q$. Here we only need to consider the case when $\langle g_1\rangle \neq \langle g_2 \rangle $. Based on the preceding discussion, we must have $g_1=(x_1,z_1)$ and $g_2=(x_2,z_2)$, for some $x_1,x_2\in\Z_p\times \Z_p$ and $z_1,z_2\in \Z_q\setminus\{0\}$. Since $z_2$ is non-zero, it generates $\Z_q$ and so $-z_1=rz_2$ for a suitable $r.$ Note that $\langle g_2\rangle=\langle g_2^r\rangle.$ Let $g=g_1g_2^r$. Then $\langle g_1, g_2\rangle= \langle g_1, g\rangle,$ where $o(g)=p$. However, we have already seen that an element of order $p$ and an element of order $q$ cannot generate $G$.
\end{proof}
\section{Main Results}\label{sec:main}
In this section, we give a proof of \Cref{thm:odd} and \Cref{thm:even}.
\mainthmOdd*

\begin{proof}
Suppose that $n=p_1^{e_1}p_2^{e_2}\dots p_k^{e_k}$ is an odd positive integer and let $\pi(n) = \{p_1, p_2, \dots, p_k\}$. 

In the forward direction, we assume that $n$ is a 2-generated number. It is clear that $n$ has to be a cube-free number (see Proposition~\ref{pro:cubefree}). 

Suppose, for the sake of contradiction, that there are primes $p, q \in \pi(n)$ such that $p>q$ and $p^2 \mid n$ and $q \mid (p - 1)$. Then we can construct a group $G$ of the form $H \times \mathbb{Z}_{n/(p^2q)}$, where $|H| = p^2q$ and $d(H) = 3$. The existence of such a group $H$ is guaranteed by Theorem~\ref{thm:np^2q}, since $q\mid (p-1)$. Thus, $d(H \times \mathbb{Z}_{n/p^2q}) \geq 3$, contradicting the assumption that $n$ is 2-generated. This completes the proof of necessity. 

To prove the converse, we use induction on $k$. For the base case $k=1$, the statement holds trivially.
The inductive case is addressed by considering the following two cases:
\begin{description}
    \item[(i)] $e_1 = 2$;
    \item[(ii)] $e_1 = 1$.
\end{description}

\textbf{Case (i):} Suppose $n = p_1^{2}p_2^{e_2}\dots p_k^{e_k}$ is cube-free and  for every $i>j$, $p_j\nmid (p_{i}-1)$  or $e_i=1$.
Since $n$ is an odd and cube-free number, by Corollary~\ref{cor:tower}, the group $G$ is of the form $S_{p_1} \rtimes_{\phi} H$, where $S_{p_1}$ is the Sylow $p_1$-subgroup of $G$, $H$ is a subgroup of order $n/p_1^2$, and $\phi$ is a homomorphism from $H$ to $\Aut(S_{p_1})$.

If $\phi$ is the trivial homomorphism, then $G \cong S_{p_1} \times H$. Note that $\gcd(p_1, |H|) = 1$. Since $H$ has $k-1$ distinct prime divisors, by the induction hypothesis, we have $d(H)\leq 2$. Hence, we can apply Lemma~\ref{thm:Cornell} to obtain 
\[
d(G) = \max\{d(S_{p_1}), d(H)\} \leq 2.
\]
Therefore, $G$ is 2-generated.

Now we assume that $\phi$ is a non-trivial homomorphism from $H$ to $\Aut(S_{p_1})$. We first consider that $S_{p_1} \cong \mathbb{Z}_{p_1^2}$, 
so $\Aut(\mathbb{Z}_{p_1^2}) \cong U(p_1^2)$, where $U(m)$ denotes the group of units of $\mathbb{Z}_m$. Note that $|U(p_1^2)| = p_1(p_1 - 1)$. Since the action $\phi$ is non-trivial, we have $\{1\} \neq \phi(H) \leq U(p_1^2)$, which implies $|\phi(H)| \mid p_1(p_1 - 1)$. However, this contradicts the assumption that $p_j \nmid (p_1 - 1)$ for all $p_j \in \pi(n) \setminus \{p_1\}$. Therefore, such a non-trivial homomorphism cannot exist if $S_{p_1} \cong \mathbb{Z}_{p_1^2}$. 

Now let $S_{p_1} \cong \mathbb{Z}_{p_1} \times \mathbb{Z}_{p_1}$. Suppose that there exists a non-trivial homomorphism $\phi : H \rightarrow \Aut(\mathbb{Z}_{p_1} \times \mathbb{Z}_{p_1}) \cong \mathrm{GL}(2, p_1)$. Since $H$ is of odd order with the condition that for every $p_j \in \pi(n)$, $p_j \nmid (p_1 - 1)$, from the proof of Lemma 2.1 in \cite{qiao}, we obtain that $\phi(H) \cong \mathbb{Z}_b$ for some $b \mid (p_1 + 1),$ $b \mid |H|$ and $\phi(H)$ is an irreducible subgroup of $\operatorname{GL}(2, p_1)$.

Since $H $ acts on $ S_{p_1} $ via the homomorphism $ \phi: H \to \operatorname{GL}(2, p_1) $, and $ \phi(H) $ acts irreducibly on $ S_{p_1} $ (see \cite[Lemma 2.1]{qiao}), it follows that the action of $ H $ on $ S_{p_1} $ is irreducible. That is, $ S_{p_1} $ has no proper non-trivial $ H $-invariant subgroups. Hence, $ S_{p_1} $ is a minimal normal subgroup of $ G $.
Note that $|G/S_{p_1}|$ has fewer than $k$ prime divisors, so we can apply the induction hypothesis to the quotient group, which implies that $d(G/S_{p_1}) \leq 2$. Suppose $d(G/S_{p_1}) = 1$. Since $S_{p_1}$ is a minimal normal subgroup of $G$, by Theorem~\ref{thm:Andrea}, we have $d(G) \leq 2$. Suppose $d(G/S_{p_1}) = 2$. By Theorem~\ref{thm:schur}, all the complements of $S_{p_1}$ in $G$ are conjugate to $H$. The number of such conjugates is equal to $[G : N_G(H)]$, and since $H \leq N_G(H)$, it follows that $[G : N_G(H)] \leq p_1^2$. Thus
\[
|S_{p_1}|^{d(G/S_{p_1})} = p_1^4 > [G : N_G(H)],
\]
and applying Theorem~\ref{thm:Andrea2}, we obtain $d(G) = 2$. Therefore, $G$ is 2-generated.

\bigskip
\textbf{Case (ii):} Let $e_1 = 1$. Then $S_{p_1} \cong \mathbb{Z}_{p_1}$. By Corollary~\ref{cor:tower}, we can write $G \cong \mathbb{Z}_{p_1} \rtimes_{\phi} H$, where $H$ is a group of cube-free order with $\gcd(p_1, |H|) = 1$. 
By applying the induction hypothesis to $H$ we obtain $d(G/S_{p_1}) = d(H) \leq 2$.

First, suppose $\phi$ is the trivial homomorphism. Then, as in the earlier case, applying Lemma~\ref{thm:Cornell} we conclude that $G$ is 2-generated.

Now, suppose $\phi : H \rightarrow \mathbb{Z}_{p_1}^*$ is a non-trivial homomorphism. If $H$ is cyclic, then $G$ is a metacyclic group, and therefore 2-generated.

Next, we assume that $H$ is non-cyclic, so $d(G/S_{p_1}) = d(H) = 2$. Then $\ker(\phi) \neq \{1\}$, and $S_{p_1} \cong \mathbb{Z}_{p_1}$ is a minimal normal subgroup of $G$. By Theorem~\ref{thm:schur}, the complements of $S_{p_1}$ are precisely the conjugates of $H$. As in the earlier case, using Theorem~\ref{thm:Andrea2}, we get $d(G) = 2$. 
\end{proof}
Next, we address the case where $n$ is an even positive integer and examine the conditions under which $n$ is a 2-generated number. First we recall Theorem~\ref{thm:pq^2}, and Theorem~\ref{thm:np^2q}, which state that  $n=4p$ is a 2-generated number, whereas $n=2p^2$ is not a 2-generated number, for any odd prime $p$, respectively. 

\mainthmEven*
\begin{proof}

Let $n = 2^\alpha p_1^{e_1} p_2^{e_2} \cdots p_k^{e_k}$, where each $p_i$ is an odd prime and $\alpha, e_i$ are positive integers. 

Suppose first that $n$ is a $2$-generated number. Then by Proposition~\ref{pro:cubefree}, $n$ must be \emph{cube-free} and thus $\alpha \leq 2$. We aim to show that each exponent $e_i = 1$.

Suppose, for contradiction, that $e_i = 2$ for some $i$. Then $2p_i^2 \mid n$. Consider a group $H$ of order $2p_i^2$ which is not 2-generated (such a group exists by \Cref{thm:np^2q}). Define $G = H \times \mathbb{Z}_{n/(2p_i^2)}$. Since $d(H)=3$, we have $d(G)\geq 3$, contradicting the assumption that $n$ is a $2$-generated number. Thus, each $e_i = 1$. This proves that if $n$ is a $2$-generated number, then $n = 2^\alpha p_1 \cdots p_k$ with $\alpha \leq 2$ and $p_i$'s are distinct odd primes.

For the other direction, suppose $n = 2^\alpha p_1 \cdots p_k$ with $\alpha \leq 2$. Let $G$ be a group of order $n$. We now show that $n$ is a 2-generated number. If $\alpha = 1$, then $n$ is square-free. All such $G$ are 2-generated.

We now proceed with the remainder of the proof under the assumption that $\alpha = 2$, considering the following two cases:

\noindent \textbf{\textbf{Case 1:}} $G$ is non-solvable. By Theorem~\ref{thm:unsolve}, any non-solvable group of cube-free order has the form
\[
G = \mathrm{PSL}(2, p) \times L,
\]
where $p \geq 5$ is congruent to $3$ or $5$ modulo $8,$ $|PSL(2,p)|/ 8$ is an odd square-free number, and $L$ is a group of odd square-free order. One can check that to match the order of the group, we also require $p$ to be congruent to 3 or 5 modulo 8. Note that $\gcd(|\mathrm{PSL}(2, p)|, |L|) = 1$ as the odd factor of $|G|$ is square-free. Moreover, as both $\mathrm{PSL}(2, p)$ and $L$ are 2-generated (the former is non-abelian simple, the latter square-free), it follows from Lemma~\ref{thm:Cornell} that $d(G) = \max\{d(\mathrm{PSL}(2, p)), d(L)\} = 2$. Thus, in this case, $G$ is 2-generated.

\noindent \textbf{Case 2:} $G$ is solvable. We analyze this case based on the structure of the Sylow 2-subgroup $S_2$. There are two possibilities.

First, we suppose that $S_2 \cong \mathbb{Z}_4$. Then all Sylow subgroups of $G$ are cyclic, since all odd primes appear only to the first power. Thus, $G$ is a \emph{Z-group}. By \cite[Theorem 10.1.10]{robin}, Z-groups are 2-generated.

Now we assume that $S_2 \cong \mathbb{Z}_2 \times \mathbb{Z}_2$. In this case, we proceed by  \emph{induction on $k$}, the number of distinct odd prime divisors of $n$.

If $k = 1,$ then $n = 4p_1$, and by Theorem~\ref{thm:pq^2}, such groups are always 2-generated.

Let $G$ be a group of order $4p_1 \cdots p_k$. Since $G$ is solvable, it contains a \emph{non-trivial minimal normal subgroup $N$}, which must be elementary abelian.

Suppose $N \cong \mathbb{Z}_{p_i}$ for some odd prime $p_i$.   
Then $G/N$ has order $4p_1 \cdots p_{i-1} p_{i+1} \cdots p_k$, and by the  induction hypothesis, it is 2-generated. If $d(G/N) = 1$, then by Theorem~\ref{thm:Andrea}, $G$ is 2-generated. If $d(G/N) = 2$, then note that $\gcd(|N|, |G/N|) = 1$, so by the Schur--Zassenhaus theorem  (see \Cref{thm:schur}), $N$ is complemented in $G$. Using Theorem~\ref{thm:Andrea2} we can conclude that $d(G) = 2$.

On the other hand, when $N \cong \mathbb{Z}_2$ or $\mathbb{Z}_2 \times \mathbb{Z}_2$, and $G$ has no minimal normal subgroup corresponding to any odd prime. In this case, $N$ must be the \emph{unique} minimal normal subgroup of $G$. The quotient $G/N$ is of square-free order, which is less than that of $G$; therefore, $d(G/N) \leq 2$. Then by Theorem~\ref{thm:unique}, we have
\[
d(G) = \max\{2, d(G/N)\} = 2.
\]
\end{proof}

\section*{Acknowlegdements}
The authors sincerely thank Dr.~A.~Satyanarayana Reddy for introducing them to the concept of \(P\)-numbers. Kavita Samant gratefully acknowledges Indian Institute of Technology Gandhinagar for its support during her visit, which facilitated the initial discussions with the coauthors. Dhara Thakkar is supported by the JSPS KAKENHI grant No. JP24H00071.

 \section*{Data availability}
No data was used for the research described in the article.

\addcontentsline{toc}{section}{Bibliography}
\bibliographystyle{acm}
\bibliography{cube}
\end{document}